\documentclass[11pt]{amsart}

\usepackage[T1]{fontenc}
\usepackage{amssymb,amsthm,mathtools}
\usepackage[expansion=false]{microtype}
\usepackage{xcolor}
\usepackage[
  colorlinks=true,
  linkcolor=blue!55!black,
  citecolor=green!45!black,
  urlcolor=blue!65!black
]{hyperref}

\numberwithin{equation}{section}
\mathtoolsset{showonlyrefs}

\theoremstyle{plain}
\newtheorem{theorem}{Theorem}[section]
\newtheorem{lemma}[theorem]{Lemma}
\newtheorem{proposition}[theorem]{Proposition}

\theoremstyle{definition}

\newcommand{\R}{\mathbb{R}}
\newcommand{\Z}{\mathbb{Z}}
\newcommand{\N}{\mathbb{N}}
\newcommand{\cS}{\mathcal{S}}

\title{Sharp Bounds for Discrete Cube Skeletons}
\author{Dean Menezes}
\address{Department of Mathematics, University of Texas at Austin, Austin, TX 78712, USA}
\email{dean.menezes@utexas.edu}
\date{August 25, 2026}
\hypersetup{
  pdftitle={Sharp Bounds for Discrete Cube Skeletons},
  pdfauthor={Dean Menezes},
  pdfsubject={Sharp finite-cardinality bounds for discrete cube skeletons},
  pdfkeywords={cube skeleton, discrete geometry, entropy, Shearer's inequality}
}
\subjclass[2020]{05D05, 52C10}
\keywords{cube skeleton, discrete geometry, entropy, Shearer's inequality}

\begin{document}

\begin{abstract}
Fix integers $0\leq k<n$.  Let $F_{n,k}(N)$ be the least size of a finite
set $B\subset\Z^n$ that contains a filled axis-parallel cube $k$-skeleton
centered at each point of some $N$-point set.  We prove that $F_{n,k}(N)$ has
order $N^{1-(n-k)/(2n^2)}$, with constants depending only on $n$ and $k$.
Thornton proved the upper bound and lower bounds with every smaller exponent;
the endpoint lower bound was open for $k\geq1$.  For square boundaries in
$\Z^2$, the exponent is $7/8$.  A midpoint count and Shearer's inequality
handle large radii; induction in lattice cells handles small radii.
\end{abstract}

\maketitle

\section{Introduction}

Fix $0\leq k<n$.  Given a finite set $S\subset\Z^n$, choose a positive integer
radius $r_x$ for each $x\in S$.  We ask how small a set $B\subset\Z^n$ can be
if it contains the filled $k$-skeleton of the cube centered at each $x$.

Write $[n]=\{1,\ldots,n\}$ and $\N=\{1,2,\ldots\}$.  For a finite set $E$,
let $\binom{E}{m}$ be the family of its $m$-element subsets.  For $x\in\Z^n$,
$r\in\N$, and $0\leq k<n$, set
\[
 \cS_k(x,r)=
 \bigcup_{J\in\binom{[n]}{k}}
 \left\{
 y\in\Z^n:
 \begin{array}{ll}
 |y_j-x_j|\leq r,&j\in J,\\
 |y_i-x_i|=r,&i\notin J
 \end{array}
 \right\}.
\]
The coordinates in $J$ are free; the other $n-k$ coordinates are fixed at
$x_i\pm r$.  Thus $\cS_0(x,r)$ is the vertex set, while
$\cS_{n-1}(x,r)$ is the lattice boundary of the cube.

Let $F_{n,k}(N)$ be the minimum of $|B|$ over finite sets
$B,S\subset\Z^n$ with $|S|=N$ and radii $r_x\in\N$ such that
\[
 \cS_k(x,r_x)\subset B\qquad(x\in S).
\]

\begin{theorem}\label{thm:main}
For each $0\leq k<n$, there are constants $c_{n,k},C_{n,k}>0$ such that
\[
 c_{n,k}N^{1-(n-k)/(2n^2)}
 \leq F_{n,k}(N)
 \leq C_{n,k}N^{1-(n-k)/(2n^2)}
\]
for every $N\geq1$.
\end{theorem}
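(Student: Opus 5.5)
Thornton proved the upper bound, so the real task is the lower bound $|B|\gtrsim N^{1-(n-k)/(2n^2)}$. Write $\alpha=(n-k)/(2n^2)$. The abstract already points the way. We split the centers $S$ dyadically by radius: for $R$ a power of $2$, let $S_R=\{x\in S: R\le r_x<2R\}$. Pigeonholing over the $O(\log N)$ scales loses a logarithm, which is not allowed, so instead we run a threshold argument. Fix $\rho$, to be chosen in terms of $|B|$ and $N$. Centers with $r_x\ge\rho$ are \emph{large} and are handled by a midpoint count with Shearer's inequality. Centers with $r_x<\rho$ are \emph{small} and are handled by partitioning $\Z^n$ into cells and inducting.

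\emph{Large radii.} A center $x$ of radius $r$ is recovered from pairs of points of $B$. Take two points $y,y'\in\cS_k(x,r)$ lying on opposite faces in some fixed coordinate $i\notin J$, with $y_i=x_i-r$ and $y'_i=x_i+r$, and with the other coordinates matched. Their midpoint is $x$, and their difference is $2r e_i$. Fix a type: one $k$-set $J$ of free coordinates together with one sign pattern on the fixed coordinates. For each $x$, the points of that type form a $k$-dimensional grid of size $(2r+1)^k$ inside $B$.

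I would define a random variable as follows. Choose $x\in S$ uniformly, choose an independent uniform point of the full cube $x+[-r,r]^n$, and then project it to the skeleton in each coordinate group. Next, consider the coordinate projections $\pi_A$ for $A\in\binom{[n]}{n-1}$, or better for a cover of $[n]$ adapted to $J$. For a vector built from several points of $B$ that jointly determine both $x$ and a full-dimensional offset, Shearer's inequality bounds its entropy by a sum of entropies of projections, each at most $\log|B|$ or $\log$ of a one-coordinate quantity. Averaging over all $\binom nk$ types and all $n$ directions produces the exponent $2n^2$ in the denominator: there are $n$ coordinates, and each coordinate contributes a factor of $2$ from the midpoint pairing.

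Concretely, I aim for
\[
\log N+n\log\rho\le \Big(2n-\tfrac{n-k}{n}\Big)\cdots
\]
More cleanly, the target is an inequality of the shape
\[
N\rho^{\,n-k}\lesssim |B|^{2n/(\cdot)}.
\]
Its intent is this: each large center carries about $\rho^{k}$ skeleton points per face, while the pair structure forces $|B|^2\gtrsim N\cdot(\text{volume of offsets})$. I would calibrate the Shearer cover until the large-radius estimate reads $N_{\ge\rho}\lesssim |B|^{a}\rho^{-b}$, with exponents $a,b$ chosen so that they balance against the small-radius bound exactly at $\alpha$.

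\emph{Small radii.} Partition $\Z^n$ into cells of side $\sim\rho$. A center with $r_x<\rho$ has its skeleton inside the $3^n$ cells around it. Rescaling each cell by $\rho$ reduces to a problem in which every skeleton meets a bounded number of cells. We then induct on $N$ using the statement itself at smaller $N$: $|B|\ge\sum_Q c\,|S\cap Q|^{1-\alpha}$ summed over cells, combined with a lower bound of order $\rho^k$ on $|B\cap Q|$ for each occupied cell. Adding the two, by Hölder, gives $|B|\gtrsim N^{1-\alpha}$ provided the number of occupied cells is controlled. If not, each cell contains few centers and the trivial bound $|B|\gtrsim \#\text{cells}\cdot\rho^k$ wins.

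The main obstacle is the large-radius entropy step. We must choose the family of points of $B$ and the Shearer cover so that the resulting exponent is exactly $1-(n-k)/(2n^2)$ rather than something weaker. Thornton's proofs give every smaller exponent, so the losses there must be eliminated, and that probably requires using the midpoint structure in all $n$ directions simultaneously. I would first work out the case $n=2$, $k=1$, where the target exponent is $7/8$, and only then generalize the cover.
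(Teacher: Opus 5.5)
Deferring the upper bound to Thornton is fine; the paper also uses his digit construction. For the lower bound, your outline has the right architecture: a radius threshold, a midpoint/Shearer count for large radii, and cellwise induction for small radii. But there is a genuine gap. The large-radius step, which produces the exponent, is never carried out; your target inequality is literally left blank. The small-radius step, as written, would not close.

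\emph{Large radii: the missing chain of lemmas.}
\begin{enumerate}
\item Suppose a finite $C\subset\R^n$ contains $x\pm r_{x,i}e_i$ for every $x\in T$ and every $i$. On each line parallel to $e_i$, the points of $T$ are distinct midpoints of pairs from $C$, so $|C|\geq\sqrt2\sum_L|T\cap L|^{1/2}$. Choose $i$ by Shearer (Loomis--Whitney) so that the projection of the uniform measure on $T$ along $e_i$ has entropy at least $\frac{n-1}{n}\log|T|$. Weighted AM--GM then gives $|C|\geq\sqrt2|T|^{(2n-1)/(2n)}$.
\item The linear map sending $e_1,\dots,e_n$ to the independent cube vertices $v_1=(1,\dots,1)$ and $v_i=v_1-2e_i$ turns cube vertices into such axial crosses.
\item Let $A$ be the set of fixed-coordinate tuples $x_I+r_x\sigma$, and apply the previous two steps to $C=\{z:z_I\in A\text{ for all }I\in\binom{[n]}{\ell}\}$. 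Shearer with the cover by all $\ell$-sets then gives $|A|\geq|C|^{\ell/n}\gtrsim N^{\ell(2n-1)/(2n^2)}$.
\item For one $I$, faces with distinct fixed tuples are disjoint, and each has at least $L^k$ points. Hence $|B|\gtrsim N^{\ell(2n-1)/(2n^2)}L^k$, which equals $N^{1-\ell/(2n^2)}$ exactly when $L\asymp N^{1/n}$.
\end{enumerate}
The free coordinates never enter the entropy count; only the $2^n$ vertices do, and the $k$ free directions are paid for afterwards by face size. Your ``factor $2$ per coordinate'' heuristic does not replace this. The threshold must also be $L=aN^{1/n}$, not something depending on $|B|$.

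\emph{Small radii.} Your claimed lower bound $|B\cap Q|\gtrsim\rho^k$ for each occupied cell is false. A small radius can equal $1$, so a cell may contain only centers whose skeletons have $O(1)$ points. You are also controlling the wrong quantity. The induction closes by concavity:
\[
 3^n|B|\geq c\sum_mN_m^\beta\geq c\,(\max_mN_m)^{\beta-1}\sum_mN_m .
\]
So you need an \emph{upper} bound on the number of centers per cell. This is where the lattice enters. A cell of side $h=\lceil aN^{1/n}\rceil$ holds at most $h^n\leq(2a)^nN$ centers. Choosing $a$ with $2\cdot3^n(2a)^{n(1-\beta)}\leq1$ absorbs both the $3^n$ overlap and the loss of half the centers, so the same constant propagates through the induction. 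The case you relegate to a fallback, many cells with few centers each, is in fact the favorable one. The dangerous case is a few cells holding most of the centers, and only this lattice cap rules it out. The same cap is what ties the cell size, and hence the radius threshold, to $aN^{1/n}$.
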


Thornton proved the upper bound and lower bounds with every smaller exponent
\cite[Theorems~1.3 and~1.5]{Thornton2017}.  For square boundaries, Keleti,
Nagy, and Shmerkin proved $(N/\log N)^{7/8}$ and asked whether the logarithm
could be removed \cite[Theorem~1.7]{KeletiNagyShmerkin2018}.
Theorem~\ref{thm:main} removes it.  For $k=0$, the lower bound also follows
from Thornton's orthoplex theorem after an invertible linear change of
coordinates \cite[Theorem~1.9(2)]{Thornton2017}; our proof treats all $k$ at
once.

In particular, suppose that $B,S\subset\Z^2$ are finite and that for each
$x\in S$ there is an $r_x\in\N$ with $\cS_1(x,r_x)\subset B$.  Then
\[
 |B|\geq c|S|^{7/8}
\]
for an absolute constant $c>0$, and the exponent is sharp.  Here each
skeleton is the boundary of a square and has $8r_x$ lattice points.

For the lower bound, put $\ell=n-k$.  A midpoint count first
gives many cube vertices.  Shearer's inequality then gives at least a constant
multiple of $N^{\ell(2n-1)/(2n^2)}$ distinct tuples of fixed coordinates.
Split the centers at radius $L=aN^{1/n}$, where $a>0$ is a small constant.
If at least half the radii are at least $L$, one choice of the fixed coordinate
positions gives many disjoint $k$-faces, each with at least $L^k$ points.  If
at least half are smaller than $L$, divide $\Z^n$ into cells of side
$\lceil L\rceil$ and use induction on the number of centers.  A point belongs
to at most $3^n$ of the resulting cellwise unions.  Thornton's digit
construction gives the matching upper bound.

\section{Counting lemmas}

Every random vector below has finite support, and all logarithms are natural.
For entropy notation, see Cover and Thomas \cite{CoverThomas2006}.  If
$I=\{i_1<\cdots<i_m\}\subset[n]$, write
$X_I=(X_{i_1},\ldots,X_{i_m})$.

We use Shearer's inequality \cite{ChungGrahamFranklShearer1986}.  Applied to
the $n$ sets $[n]\setminus\{i\}$, it gives the finite Loomis--Whitney
inequality \cite{LoomisWhitney1949}.

\begin{lemma}[Shearer's inequality]\label{lem:shearer}
Let $X=(X_1,\ldots,X_n)$ be a random vector.  Let $\mathcal I$ be a family of
subsets of $[n]$, and suppose that each coordinate belongs to at least $q$
members of $\mathcal I$.  Then
\[
 qH(X)\leq\sum_{I\in\mathcal I}H(X_I).
\]
\end{lemma}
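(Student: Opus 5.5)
We prove Lemma~\ref{lem:shearer} by induction on $n$, in the standard way, from the chain rule for entropy and the fact that conditioning does not increase entropy. First we reduce to $q$ being an exact count. For each coordinate $i$, let $d_i\geq q$ be the number of members of $\mathcal I$ containing $i$.

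Expand by the chain rule in the fixed order $1,\ldots,n$:
\[
 H(X)=\sum_{i=1}^n H(X_i\mid X_1,\ldots,X_{i-1}).
\]
For each $I=\{i_1<\cdots<i_m\}\in\mathcal I$, the chain rule in the same order gives
\[
 H(X_I)=\sum_{t=1}^m H(X_{i_t}\mid X_{i_1},\ldots,X_{i_{t-1}}).
\]
Since $\{i_1,\ldots,i_{t-1}\}\subset\{1,\ldots,i_t-1\}$ and conditioning on more variables does not increase entropy,
\[
 H(X_{i_t}\mid X_{i_1},\ldots,X_{i_{t-1}})\geq H(X_{i_t}\mid X_1,\ldots,X_{i_t-1}).
\]
Hence $H(X_I)\geq\sum_{i\in I}H(X_i\mid X_{<i})$, where $X_{<i}=(X_1,\ldots,X_{i-1})$.

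Summing over $I\in\mathcal I$ and exchanging the order of summation gives
\[
 \sum_{I\in\mathcal I}H(X_I)\geq\sum_{i=1}^n d_i\,H(X_i\mid X_{<i}).
\]
Each conditional entropy is nonnegative and $d_i\geq q$, so the right side is at least
\[
 q\sum_i H(X_i\mid X_{<i})=qH(X).
\]
This is the claimed inequality. The induction on $n$ is thus unnecessary: the chain-rule argument is direct.

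None of the steps is a genuine obstacle. The one point needing care is the monotonicity of conditional entropy, $H(A\mid B,C)\leq H(A\mid B)$, which is $I(A;C\mid B)\geq0$. It holds because all random vectors here have finite support, so every entropy is finite and the subtraction is legitimate. We would cite Cover and Thomas \cite{CoverThomas2006} for this fact and for the chain rule, and not reprove them.
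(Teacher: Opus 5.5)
Your proof is correct and is essentially the paper's argument: expand each $H(X_I)$ by the chain rule in increasing coordinate order, enlarge the conditioning to $X_{<i}$, sum over $\mathcal I$, and use $d_i\geq q$ together with nonnegativity of conditional entropy, which the paper leaves implicit. The opening references to induction on $n$ and to ``reducing to an exact count'' play no role in the argument and should simply be deleted.
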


\begin{proof}
For each $I\in\mathcal I$, the chain rule and the fact that conditioning cannot increase
entropy give
\[
 H(X_I)
 =\sum_{j\in I}H\bigl(X_j\mid X_{I\cap[j-1]}\bigr)
 \geq\sum_{j\in I}H\bigl(X_j\mid X_{[j-1]}\bigr).
\]
After summing over $I$, each term on the right occurs at least $q$ times.
The chain rule for $H(X)$ gives the result.
\end{proof}

\begin{lemma}[Coordinate midpoints]\label{lem:axial}
Let $C,T\subset\R^n$ be finite.  Suppose that for every $x\in T$ and $i\in[n]$,
there is an $r_{x,i}>0$ such that
\[
 x-r_{x,i}e_i,\ x+r_{x,i}e_i\in C,
\]
where $e_i$ is the $i$th coordinate vector.  Then
\[
 |C|\geq\sqrt2\,|T|^{(2n-1)/(2n)}.
\]
\end{lemma}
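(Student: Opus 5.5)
The plan is to combine a one-dimensional midpoint count on each axis-parallel line with Shearer's inequality (Lemma~\ref{lem:shearer}), applied to a uniformly random point of $T$. The exponent comes out as $\tfrac12+\tfrac{n-1}{2n}=\tfrac{2n-1}{2n}$.

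\emph{Step 1: the count on a single line.} Fix $i\in[n]$. For a line $\lambda$ parallel to $e_i$, put $t_\lambda=|T\cap\lambda|$ and $c_\lambda=|C\cap\lambda|$. Each $x\in T\cap\lambda$ is the midpoint of the two distinct points $x\pm r_{x,i}e_i$. Both points lie in $C\cap\lambda$. A pair of points has only one midpoint, so $x\mapsto\{x-r_{x,i}e_i,\,x+r_{x,i}e_i\}$ is injective. Hence
\[
 t_\lambda\leq\binom{c_\lambda}{2}\leq \frac{c_\lambda^2}{2},
 \qquad\text{so}\qquad
 c_\lambda\geq\sqrt{2t_\lambda}.
\]
Distinct parallel lines are disjoint, so summing over the lines in direction $i$ that meet $T$ gives
\[
 |C|\geq\sum_{\lambda}\sqrt{2t_\lambda}.
\]

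\emph{Step 2: passing to entropy.} Let $X$ be uniform on $T$, and let $\lambda(X)$ be the line through $X$ in direction $i$. Then
\[
 \sum_\lambda\sqrt{2t_\lambda}
 =\sqrt2\,|T|\,\mathbb E\bigl[t_{\lambda(X)}^{-1/2}\bigr].
\]
Writing $t^{-1/2}=\exp(-\tfrac12\log t)$ and using Jensen's inequality for $\exp$ gives
\[
 \mathbb E\bigl[t_{\lambda(X)}^{-1/2}\bigr]\geq\exp\bigl(-\tfrac12\,\mathbb E\log t_{\lambda(X)}\bigr).
\]
The line $\lambda(X)$ is determined by $X_{[n]\setminus\{i\}}$. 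Conditional on that value, $X$ is uniform on $t_\lambda$ points. Therefore
\[
 \mathbb E\log t_{\lambda(X)}=H\bigl(X_i\mid X_{[n]\setminus\{i\}}\bigr)=\log|T|-H\bigl(X_{[n]\setminus\{i\}}\bigr).
\]
Combining the last three displays,
\[
 |C|\geq\sqrt2\,|T|^{1/2}\exp\Bigl(\tfrac12H\bigl(X_{[n]\setminus\{i\}}\bigr)\Bigr)
 \qquad(i\in[n]).
\]

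\emph{Step 3: averaging and Shearer.} Take the geometric mean of the $n$ bounds from Step 2. Then apply Lemma~\ref{lem:shearer} to the family $\{[n]\setminus\{i\}: i\in[n]\}$. Each coordinate lies in $q=n-1$ of these sets, so
\[
 \sum_i H\bigl(X_{[n]\setminus\{i\}}\bigr)\geq(n-1)H(X)=(n-1)\log|T|.
\]
This yields
\[
 |C|\geq\sqrt2\,|T|^{1/2}\,|T|^{(n-1)/(2n)}=\sqrt2\,|T|^{(2n-1)/(2n)},
\]
as required.

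The main point needing care is Step 2. A direct Hölder argument on the sums $\sum_\lambda\sqrt{t_\lambda}$ does not mesh cleanly with Loomis--Whitney. The entropy route avoids this, since Jensen converts the sum over lines into exactly the conditional entropy that the chain rule pairs with the projection entropy. The case $n=1$ is consistent: there the projection is trivial, and the bound reduces to $|C|\geq\sqrt{2|T|}$.
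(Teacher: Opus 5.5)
Your proof is correct and follows the paper's proof: the per-line midpoint count $t_\lambda\le c_\lambda^2/2$, then Jensen (the paper's weighted AM--GM) to turn $\sum_\lambda\sqrt{t_\lambda}$ into $|T|^{1/2}\exp\bigl(\tfrac12 H(X_{[n]\setminus\{i\}})\bigr)$, then Shearer on the sets $[n]\setminus\{i\}$. The only difference is cosmetic and equivalent: you take the geometric mean of the $n$ bounds, while the paper picks one $i$ with $H(X_{[n]\setminus\{i\}})\geq\frac{n-1}{n}\log|T|$.
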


\begin{proof}
Assume $T\neq\varnothing$.  Fix $i$ and partition $\R^n$ into lines parallel
to $e_i$.  For each line $L$, let
\[
 d_L=|T\cap L|,
 \qquad
 b_L=|C\cap L|.
\]
The points of $T\cap L$ are distinct midpoints of pairs from $C\cap L$.
Hence
\[
 d_L\leq\binom{b_L}{2}\leq\frac{b_L^2}{2},
 \qquad
 |C|\geq\sqrt2\sum_L\sqrt{d_L}.
\]

Let $X$ be uniform on $T$, and put $p_L=d_L/|T|$ when $d_L>0$.  In the
next display, the sums and product run over these lines.  The projection
$X_{[n]\setminus\{i\}}$ has probabilities $p_L$, so weighted AM--GM gives
\[
 \sum_L\sqrt{p_L}
 =\sum_L p_L\,p_L^{-1/2}
 \geq\prod_L p_L^{-p_L/2}
 =\exp\left(\frac12H(X_{[n]\setminus\{i\}})\right).
\]
The preceding estimate holds for every $i$.  By Lemma~\ref{lem:shearer},
choose $i$ such that
\[
 H(X_{[n]\setminus\{i\}})
 \geq\frac{n-1}{n}H(X)
 =\frac{n-1}{n}\log|T|.
\]
For this $i$,
\[
 |C|
 \geq\sqrt{2|T|}\sum_L\sqrt{p_L}
 \geq\sqrt2\,|T|^{(2n-1)/(2n)}.
\]
\end{proof}

For $x\in\R^n$ and $r>0$, let
\[
 \mathcal V(x,r)=x+r\{\pm1\}^n
\]
be the vertex set of the axis-parallel cube with center $x$ and half-side $r$.

\begin{lemma}[Cube vertices]\label{lem:vertices}
Let $C,T\subset\R^n$ be finite.  Suppose that for every $x\in T$, there is an
$r_x>0$ such that $\mathcal V(x,r_x)\subset C$.  Then
\[
 |C|\geq\sqrt2\,|T|^{(2n-1)/(2n)}.
\]
\end{lemma}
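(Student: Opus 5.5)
We reduce Lemma~\ref{lem:vertices} to Lemma~\ref{lem:axial} by a linear change of coordinates that turns cube vertices into axial pairs. A single cube gives $2^n$ vertices, and axial midpoints need only $2n$ points, so we look for a transformation under which, for each center $x$ and each direction, some pair of opposite vertices of $\mathcal V(x,r_x)$ lies on a line through $x$ in that direction. Opposite vertices $x\pm r\varepsilon$ with $\varepsilon\in\{\pm1\}^n$ always have midpoint $x$. So we choose $n$ linearly independent sign vectors $\varepsilon^{(1)},\dots,\varepsilon^{(n)}\in\{\pm1\}^n$. One choice is $\varepsilon^{(i)}=\mathbf 1-2e_i$ for $n\ge3$, or $\mathbf 1$ together with $\mathbf 1-2e_i$ for $i<n$ in general. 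Independence holds since the matrix with these columns is nonsingular; for $n=1,2$ we can check by hand, e.g.\ $(1,1),(1,-1)$.

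Let $A$ be the invertible linear map sending $\varepsilon^{(i)}\mapsto e_i$, and set $C'=A(C)$ and $T'=A(T)$. For $x\in T$ and each $i$, the points $x\pm r_x\varepsilon^{(i)}$ lie in $\mathcal V(x,r_x)\subset C$. By linearity, $Ax\pm r_xe_i\in C'$. Hence $C',T'$ satisfy the hypothesis of Lemma~\ref{lem:axial} with $r_{Ax,i}=r_x>0$. Since $A$ is injective, $|C'|=|C|$ and $|T'|=|T|$, so Lemma~\ref{lem:axial} yields $|C|\ge\sqrt2\,|T|^{(2n-1)/(2n)}$.

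The only step needing care is the choice of $n$ linearly independent sign vectors, which is elementary. We must also note that Lemma~\ref{lem:axial} is stated for arbitrary finite subsets of $\R^n$, with no lattice assumption, so it applies to the transformed sets $A(C)$ and $A(T)$, which need not lie in $\Z^n$.
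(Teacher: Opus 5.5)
Your proposal is correct and is essentially the paper's proof: the paper also uses the sign vectors $\mathbf 1$ and $\mathbf 1-2e_i$ ($2\le i\le n$) as a basis, pulls $C$ and $T$ back by the inverse of the linear map sending $e_i$ to these vectors, and applies Lemma~\ref{lem:axial}. Your side remark is also right: the vectors $\mathbf 1-2e_i$ for all $i$ form a basis except when $n=2$, since the matrix with these columns has determinant $(n-2)(-2)^{n-1}$.
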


\begin{proof}
Set
\[
 v_1=(1,\ldots,1),
 \qquad
 v_i=v_1-2e_i\quad(2\leq i\leq n).
\]
Since $v_i-v_1=-2e_i$ for $i\geq2$, the span of the $v_i$ contains
$e_2,\ldots,e_n$.  It also contains
$e_1=v_1-e_2-\cdots-e_n$.  Thus the $v_i$ form a basis.  Let $M$ be the
linear map with $Me_i=v_i$.  Each $v_i$ is a cube vertex, so $C$ contains
$x\pm r_xv_i$ for every $x\in T$ and $i\in[n]$.
Thus $M^{-1}C$ contains
\[
 M^{-1}x\pm r_xe_i\qquad(i\in[n]).
\]
Apply Lemma~\ref{lem:axial} to $M^{-1}C$ and $M^{-1}T$.
\end{proof}

\section{Fixed-coordinate tuples}

A $k$-face has $k$ free coordinates and $\ell=n-k$ fixed coordinates.  The
next lemma counts the possible fixed-coordinate tuples.

\begin{lemma}[Fixed-coordinate bound]\label{lem:fixed}
Fix $1\leq\ell\leq n$.  Let $A\subset\R^\ell$ and $T\subset\R^n$ be finite.
Suppose that for every $x\in T$, there is an $r_x>0$ such that
\[
 x_I+r_x\sigma\in A
\]
for every $I\in\binom{[n]}{\ell}$ and $\sigma\in\{\pm1\}^\ell$.  Then
\[
 |A|\geq
 2^{\ell/(2n)}|T|^{\ell(2n-1)/(2n^2)}.
\]
\end{lemma}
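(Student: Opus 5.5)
The plan is to lift the problem back to $\R^n$ so that Lemma~\ref{lem:vertices} applies, and then to pass from $\R^n$ down to $\R^\ell$ using Shearer's inequality. The exponent factors as
\[
 \frac{\ell(2n-1)}{2n^2}=\frac{\ell}{n}\cdot\frac{2n-1}{2n},
\]
and the constant as $2^{\ell/(2n)}=(\sqrt2)^{\ell/n}$. This factorization suggests proving
\[
 |C|\geq\sqrt2\,|T|^{(2n-1)/(2n)}
 \qquad\text{and}\qquad
 |C|\leq|A|^{n/\ell}
\]
for a suitable finite set $C\subset\R^n$.

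Define
\[
 C=\Bigl\{y\in\R^n:\ y_I\in A\ \text{for every } I\in\tbinom{[n]}{\ell}\Bigr\}.
\]
\begin{itemize}
\item $C$ is finite. Since $\ell\geq1$, each coordinate $j$ lies in some $\ell$-set $I$, so $y_j$ ranges over the finitely many $j$-coordinates of points of $A$.
\item $C$ contains every relevant cube vertex. Take $x\in T$ and $y=x+r_x\tau\in\mathcal V(x,r_x)$ with $\tau\in\{\pm1\}^n$. Then for every $I$ we have $y_I=x_I+r_x\tau_I$ with $\tau_I\in\{\pm1\}^\ell$, which lies in $A$ by hypothesis. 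Hence $\mathcal V(x,r_x)\subset C$.
\end{itemize}
Lemma~\ref{lem:vertices} therefore gives $|C|\geq\sqrt2\,|T|^{(2n-1)/(2n)}$.

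For the upper bound, let $Y$ be uniform on $C$, so that $H(Y)=\log|C|$. Apply Lemma~\ref{lem:shearer} with $\mathcal I=\binom{[n]}{\ell}$. Each coordinate lies in exactly $q=\binom{n-1}{\ell-1}$ members of $\mathcal I$. Each $Y_I$ takes values in $A$, so $H(Y_I)\leq\log|A|$. Shearer's inequality then gives
\[
 \tbinom{n-1}{\ell-1}\log|C|\leq\tbinom{n}{\ell}\log|A|.
\]
Since $\binom{n}{\ell}/\binom{n-1}{\ell-1}=n/\ell$, this says $|C|\leq|A|^{n/\ell}$.

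Combining the two bounds,
\[
 |A|\geq|C|^{\ell/n}\geq 2^{\ell/(2n)}|T|^{\ell(2n-1)/(2n^2)}.
\]
The only step requiring an idea is choosing $C$ as the set of points all of whose $\ell$-projections lie in $A$. After that choice, both bounds are direct applications of the earlier lemmas. The one check needed is that the hypothesis is stated for all $\sigma\in\{\pm1\}^\ell$ and all $I$, which is exactly what places the full vertex set $\mathcal V(x,r_x)$ inside $C$.
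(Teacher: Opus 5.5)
Your proposal is correct and follows the paper's proof essentially verbatim. The paper uses the same set $C$ of points whose $\ell$-projections all lie in $A$, bounds $|C|$ from below by Lemma~\ref{lem:vertices}, and bounds it from above by Shearer over $\binom{[n]}{\ell}$. The only point the paper makes explicit that you leave implicit is assuming $T\neq\varnothing$, since the empty case is trivial. That assumption makes $C$ nonempty, so the uniform $Y$ on $C$ is defined.
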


\begin{proof}
Assume $T\neq\varnothing$, and set
\[
 C=\left\{z\in\R^n:z_I\in A
 \text{ for every }I\in\binom{[n]}{\ell}\right\}.
\]
The set $C$ is finite.  For each coordinate $j$, choose an $\ell$-set $I$
containing $j$.  Since $A$ is finite, $z_j$ has only finitely many choices.

Fix $x\in T$ and $\tau\in\{\pm1\}^n$.  For every $I$,
\[
 (x+r_x\tau)_I=x_I+r_x\tau_I\in A.
\]
Thus $\mathcal V(x,r_x)\subset C$, and Lemma~\ref{lem:vertices} gives
\[
 |C|\geq\sqrt2\,|T|^{(2n-1)/(2n)}.
\]

Let $Z$ be uniform on $C$.  Each coordinate belongs to
$\binom{n-1}{\ell-1}$ of the $\ell$-subsets of $[n]$, and each $Z_I$ takes
values in $A$.  Lemma~\ref{lem:shearer} gives
\[
 \binom{n-1}{\ell-1}\log|C|
 \leq\sum_{|I|=\ell}H(Z_I)
 \leq\binom{n}{\ell}\log|A|.
\]
Since
$\binom{n-1}{\ell-1}/\binom{n}{\ell}=\ell/n$,
\[
 |A|\geq|C|^{\ell/n}
 \geq2^{\ell/(2n)}|T|^{\ell(2n-1)/(2n^2)}.
\]
\end{proof}

\section{The lower bound}

Set
\[
 \ell=n-k,
 \qquad
 \gamma=\frac{\ell(2n-1)}{2n^2},
 \qquad
 \beta=\gamma+\frac{k}{n}=1-\frac{\ell}{2n^2}<1.
\]

\begin{proposition}\label{prop:lower}
For each $0\leq k<n$, there is a constant $c_{n,k}>0$ such that
\[
 |B|\geq c_{n,k}|S|^\beta
\]
whenever finite sets $B,S\subset\Z^n$ and radii $r_x\in\N$ satisfy
$\cS_k(x,r_x)\subset B$ for every $x\in S$.
\end{proposition}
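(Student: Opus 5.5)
We will prove the bound by strong induction on $N=|S|$, choosing $c=c_{n,k}$ small enough at the end. The base cases with $N$ below a fixed threshold $N_0(n,k)$ hold trivially: $B$ is nonempty, so $|B|\geq1\geq cN^\beta$ once $c\leq N_0^{-\beta}$. For the inductive step, put $L=aN^{1/n}$ with a small constant $a>0$, and split $S$ into $S_{\ge}=\{x:r_x\geq L\}$ and $S_{<}=\{x:r_x<L\}$. One of these sets has at least $N/2$ elements.

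\emph{Large radii.} Suppose $|S_\ge|\geq N/2$. For each $I\in\binom{[n]}{\ell}$, let $A_I\subset\Z^\ell$ be the set of tuples $x_I+r_x\sigma$ with $x\in S_\ge$ and $\sigma\in\{\pm1\}^\ell$. Up to the reordering of coordinates, each such tuple is the fixed-coordinate tuple of a $k$-face of $\cS_k(x,r_x)$ whose free coordinates are $[n]\setminus I$. Set $A=\bigcup_I A_I\subset\Z^\ell$, identifying coordinates in increasing order. The hypothesis of Lemma~\ref{lem:fixed} holds for $A$ and $T=S_\ge$, so $|A|\geq c'N^\gamma$. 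Since $|A|\leq\sum_I|A_I|$, there is one $I$ with $|A_I|\geq c'N^\gamma/\binom n\ell$. For each $\alpha\in A_I$, pick one center $x$ and one $\sigma$ realizing it. The corresponding face lies in $B$ and is contained in the slice $\{y:y_I=\alpha\}$. Distinct $\alpha$ give disjoint slices, so these faces are pairwise disjoint. Each face has at least $(2r_x+1)^k\geq L^k=a^kN^{k/n}$ points. Hence
\[
 |B|\geq c''a^kN^{\gamma+k/n}=c''a^kN^\beta.
\]
This case has no induction, so it fixes a constant independent of $c$.

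\emph{Small radii.} Suppose $|S_<|\geq N/2$. Tile $\Z^n$ by cubes $Q$ of side $m=\lceil L\rceil$, and let $S_Q=S_<\cap Q$. Each skeleton $\cS_k(x,r_x)$ with $x\in Q$ lies in the enlarged cell $Q^+$ formed by $Q$ and its $3^n-1$ neighbors, so $B_Q=B\cap Q^+$ works for $S_Q$. A point lies in at most $3^n$ sets $Q^+$, so $|B|\geq3^{-n}\sum_Q|B_Q|$. We also have $|S_Q|\leq m^n\leq(2a)^nN<N$ once $a<1/2$ and $N\geq N_0$, so induction applies and gives $|B_Q|\geq c|S_Q|^\beta$. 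By concavity we want a lower bound on $\sum_Q|S_Q|^\beta$ under the constraints $\sum|S_Q|\geq N/2$ and $|S_Q|\leq(2a)^nN$. This sum is at least
\[
 (N/2)\,\bigl((2a)^nN\bigr)^{\beta-1}=2^{-1}(2a)^{-n(1-\beta)}N^\beta.
\]
Therefore
\[
 |B|\geq c\,3^{-n}2^{-1}(2a)^{-\ell/(2n)}N^\beta.
\]

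\emph{Choice of constants; the main obstacle.} The delicate point is that the induction must not lose constants. The small-radius case gains the factor $(2a)^{-\ell/(2n)}$, and this tends to infinity as $a\to0$ because $\beta<1$. So we first choose $a$ so small that $3^{-n}2^{-1}(2a)^{-\ell/(2n)}\geq1$, which closes the induction for any $c$. With $a$ fixed, we then take $c\leq c''a^k$ to cover the large-radius case, and $c\leq N_0^{-\beta}$ to cover the base cases. One bookkeeping step needs care: $m=\lceil aN^{1/n}\rceil\leq2aN^{1/n}$ requires $aN^{1/n}\geq1$, which holds for $N\geq N_0=a^{-n}$. This choice of $N_0$ is consistent with the base cases.
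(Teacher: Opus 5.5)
Your proof is correct and follows the paper's argument essentially step for step: the same split at $L=aN^{1/n}$, then Lemma~\ref{lem:fixed} with averaging and disjoint $k$-faces for large radii, and strong induction over cells of side $\lceil L\rceil$ with the $3^n$ overlap count and concavity for small radii, with the constants chosen in the same order (your threshold $N_0=a^{-n}$ is the paper's condition $L<1$). The only cosmetic difference is that you apply the induction hypothesis to $B\cap Q^+$ instead of the paper's union $Y_m$ of the skeletons centered in the cell; this works equally well, since $r_x<L\leq m$ keeps each such skeleton inside $Q^+$.
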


\begin{proof}
Choose $a\in(0,1/2)$ so small that, with $\rho=(2a)^n$,
\[
 2\cdot3^n\rho^{1-\beta}\leq1.
\]
Then $\rho<1$.  Set
\[
 d=\binom{n}{\ell}^{-1}2^{\ell/(2n)-\gamma},
 \qquad
 c_{n,k}=\min\{a^{n\beta},da^k\}.
\]
We prove the claim by strong induction on $N=|S|$.  The case $N=0$ is
trivial.
Put $L=aN^{1/n}$.  If $L<1$, then each skeleton is nonempty and
\[
 |B|\geq1>L^{n\beta}=a^{n\beta}N^\beta\geq c_{n,k}N^\beta.
\]
Assume now that $L\geq1$ and that the claim holds for fewer than $N$ centers.
Call $r_x$ large when $r_x\geq L$ and small otherwise.  At least one class has
at least $N/2$ centers.

Suppose first that the large class $S_{\mathrm L}$ has at least $N/2$ centers.
For $I\in\binom{[n]}{\ell}$, let
\[
 A_I=\{x_I+r_x\sigma:x\in S_{\mathrm L},\
       \ \sigma\in\{\pm1\}^\ell\},
 \qquad
 A=\bigcup_{|I|=\ell}A_I.
\]
Lemma~\ref{lem:fixed} and averaging give an $I$ such that
\[
 |A_I|\geq dN^\gamma.
\]
For each $u\in A_I$, choose $x\in S_{\mathrm L}$ and
$\sigma\in\{\pm1\}^\ell$ with $u=x_I+r_x\sigma$.  With
$J=[n]\setminus I$, the skeleton centered at $x$ contains
\[
 Q_u=\{y\in\Z^n:y_I=u,\ |y_j-x_j|\leq r_x\text{ for }j\in J\}.
\]
The sets $Q_u$ are disjoint because their $I$-coordinate tuples differ.  Each
has $(2r_x+1)^k\geq L^k$ points.  Therefore
\[
 |B|\geq |A_I|L^k
 \geq da^kN^{\gamma+k/n}
 \geq c_{n,k}N^\beta.
\]

Suppose instead that the small class $S_{\mathrm s}$ has at least $N/2$
centers.  Let $h=\lceil L\rceil$ and partition $\Z^n$ into the cells
\[
 P_m=hm+\{0,1,\ldots,h-1\}^n,
 \qquad m\in\Z^n.
\]
Set
\[
 S_m=S_{\mathrm s}\cap P_m,
 \qquad
 N_m=|S_m|,
 \qquad
 Y_m=\bigcup_{x\in S_m}\cS_k(x,r_x).
\]
Since $L\geq1$, we have $h\leq2L$, and hence
\[
 N_m\leq h^n\leq(2L)^n=\rho N<N.
\]
The induction hypothesis gives $|Y_m|\geq c_{n,k}N_m^\beta$ whenever
$N_m>0$.

Each point lies in at most $3^n$ sets $Y_m$.  Indeed, if $y\in Y_m$, then
$|y_i-x_i|<h$ for some $x\in P_m$ and every $i$; for fixed $y_i$, at most
three cells are possible in that coordinate.  Since $\bigcup_m Y_m\subset B$,
\[
 3^n|B|\geq\sum_m |Y_m|
 \geq c_{n,k}\sum_m N_m^\beta.
\]
Since $N_m\leq\rho N$, $\beta<1$, and
$\sum_m N_m=|S_{\mathrm s}|\geq N/2$,
\[
 \sum_m N_m^\beta
 \geq(\rho N)^{\beta-1}\sum_m N_m
 \geq\frac{N^\beta}{2\rho^{1-\beta}}.
\]
Therefore
\[
 |B|\geq
 \frac{c_{n,k}}{2\cdot3^n\rho^{1-\beta}}N^\beta
 \geq c_{n,k}N^\beta.
\]
This completes the induction.
\end{proof}

\section{The upper bound}

We use Thornton's digit construction
\cite[Lemma~2.1 and Theorem~1.5]{Thornton2017}.  We enlarge the interval for
each free coordinate so that it contains every integer from $x_j-r_x$ to
$x_j+r_x$.  This changes only the constant.

\begin{lemma}[Digit construction]\label{lem:digits}
For every $h\geq2$, there is a set $D_h\subset\Z$ with
\[
 |D_h|\leq C_n h^{2n-1}
\]
such that every $x\in\{1,\ldots,h^{2n}-1\}^n$ has an integer $r_x$ with
\[
 1\leq r_x<h^{2n}
 \qquad\text{and}\qquad
 x_j-r_x,x_j+r_x\in D_h\quad(j\in[n]).
\]
\end{lemma}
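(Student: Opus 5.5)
We will build $D_h$ from base-$h$ digit patterns and choose $r_x$ one digit at a time, from the least significant digit upward, so that each of the $2n$ numbers $x_j\pm r_x$ acquires a prescribed digit at its own digit position. The target set is
\[
 D_h=\bigl\{m\in\Z:\ -h^{2n}<m<2h^{2n},\ \text{the reduction } \bar m\in\{0,\ldots,h^{2n}-1\} \text{ of } m \bmod h^{2n}\ \text{has some base-$h$ digit in }\{0,1\}\bigr\}.
\]
Here $\bar m$ is written with exactly $2n$ base-$h$ digits, in positions $0,\ldots,2n-1$.

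\textbf{Size of $D_h$.} A residue in $\{0,\ldots,h^{2n}-1\}$ whose digit in a fixed position $t$ lies in $\{0,1\}$ can be chosen in at most $2h^{2n-1}$ ways. Taking the union over the $2n$ positions gives at most $4n h^{2n-1}$ residues. The interval $(-h^{2n},2h^{2n})$ contains at most three integers in each residue class. Hence
\[
 |D_h|\leq 12n\,h^{2n-1},
\]
so $C_n=12n$ works.

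\textbf{Choosing $r_x$.} Fix $x\in\{1,\ldots,h^{2n}-1\}^n$. List the $2n$ target quantities $x_1+r,x_1-r,\ldots,x_n+r,x_n-r$, and assign to each one its own digit position $t\in\{0,\ldots,2n-1\}$. Write $r=\sum_{t<2n} r_t h^t$ and choose the digits $r_0,r_1,\ldots$ in order.

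When $r_0,\ldots,r_{t-1}$ are already fixed, the carry into position $t$ of $x_j+r$ is determined, and so is the borrow into position $t$ of $x_j-r$ computed mod $h^{2n}$. Consequently the digit at position $t$ of the quantity assigned to $t$ equals $(x_{j,t}\pm r_t+\text{carry})\bmod h$, and this is a bijective function of $r_t\in\{0,\ldots,h-1\}$. So exactly two values of $r_t$ make this digit lie in $\{0,1\}$.

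Later digits of $r$ do not change lower digits of any sum or difference. Therefore, once all $2n$ digits are chosen, each $x_j\pm r \bmod h^{2n}$ keeps its prescribed digit in $\{0,1\}$.

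\textbf{Checking the conclusions.} These are the main points to verify, and I expect the first to be the only real subtlety.

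\begin{itemize}
\item To guarantee $r\geq1$: at position $0$ there are two admissible choices of $r_0$, and they cannot both be $0$. Pick $r_0\neq0$. This is the reason $D_h$ allows two digit values $\{0,1\}$ rather than just $0$.
\item The range condition $1\leq r<h^{2n}$ is automatic, since $r$ has $2n$ digits and $r_0\neq0$.
\item Since $1\leq x_j\leq h^{2n}-1$ and $1\leq r\leq h^{2n}-1$, we have $-h^{2n}<x_j\pm r<2h^{2n}$. Hence $x_j\pm r\in D_h$ for every $j\in[n]$.
\end{itemize}
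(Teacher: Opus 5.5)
Your proof is correct, but it takes a different route from the paper's.

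\textbf{The paper's route.} The paper writes $r$ down explicitly. It pairs the digit positions as $\{2s,2s+1\}$, gives coordinate $j$ the pair $\{2\pi(j),2\pi(j)+1\}$, and sets $r_0=\sum_j x_{j,2\pi(j)}h^{2\pi(j)}-\sum_j x_{j,2\pi(j)+1}h^{2\pi(j)+1}$. Then in the uncarried signed-digit expansion of $x_j-r_0$ (resp.\ $x_j+r_0$), the coefficient of $h^{2\pi(j)}$ (resp.\ $h^{2\pi(j)+1}$) cancels exactly. Its $D_h$ is the set of integers admitting a signed expansion with coefficients in $[-2(h-1),2(h-1)]$ and at least one zero coefficient, so carries never have to be tracked. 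The costs are:
\begin{itemize}
\item a separate dominance argument for $r_0\neq0$, which forces a nonzero digit of $x_1$ into the sum through the choice of $\pi(1)$;
\item the passage to $r_x=|r_0|$;
\item the constant $2n(4h-3)^{2n-1}$, which makes $C_n$ exponential in $n$.
\end{itemize}

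\textbf{Your route.} You work with ordinary base-$h$ digits modulo $h^{2n}$. You handle carries and borrows by choosing the digits of $r$ from the bottom up. This works because, once $r_0,\dots,r_{t-1}$ are fixed, the digit in position $t$ of the assigned quantity is a bijective function of $r_t$, and later digits of $r$ do not affect it. Allowing the target digit set $\{0,1\}$ rather than $\{0\}$ gives two admissible choices at every step, so $r\geq1$ is immediate. The window $(-h^{2n},2h^{2n})$ lifts residues back to integers at the cost of a factor $3$.

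\textbf{Trade-off.} Your version gives a simpler membership condition and a much better constant, $C_n=12n$. The price is a sequential existence argument in place of a closed formula for $r_x$. Proposition~\ref{prop:upper} uses only the size bound on $D_h$ and the existence of $r_x$, so either version serves.
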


\begin{proof}
Let
\[
 D_h=
 \left\{
 \sum_{q=0}^{2n-1}a_q h^q:
 -2(h-1)\leq a_q\leq2(h-1),\
 \ \prod_{q=0}^{2n-1}a_q=0
 \right\}.
\]
There are at most $2n(4h-3)^{2n-1}$ such coefficient vectors: choose a
zero position, then choose the other coefficients.  Hence
\[
 |D_h|\leq2n(4h-3)^{2n-1}\leq C_n h^{2n-1}.
\]

Write
\[
 x_j=\sum_{q=0}^{2n-1}x_{j,q}h^q,
 \qquad 0\leq x_{j,q}<h.
\]
Choose $q_0$ with $x_{1,q_0}>0$.  Pair the digit positions as
$\{0,1\},\{2,3\},\ldots,\{2n-2,2n-1\}$.  Choose a bijection
$\pi:[n]\to\{0,\ldots,n-1\}$ such that
$\pi(1)=\lfloor q_0/2\rfloor$, and set
\[
 r_0=
 \sum_{j=1}^n x_{j,2\pi(j)}h^{2\pi(j)}
 -\sum_{j=1}^n x_{j,2\pi(j)+1}h^{2\pi(j)+1}.
\]
The pair assigned to coordinate $1$ contains $q_0$, so at least one selected
digit is nonzero.  Let $q$ be the highest position with a nonzero selected
digit.  Its term has absolute value at least $h^q$, while
all lower terms together have absolute value at most
\[
 (h-1)\sum_{p<q}h^p=h^q-1.
\]
Thus $r_0\neq0$.  Also,
\[
 |r_0|\leq\sum_{q=0}^{2n-1}(h-1)h^q=h^{2n}-1.
\]

Fix $j$.  In the displayed signed representation of $x_j-r_0$, the
coefficient of $h^{2\pi(j)}$ is zero.  In that of $x_j+r_0$, the coefficient
of $h^{2\pi(j)+1}$ is zero.  Every coefficient lies between $-2(h-1)$ and
$2(h-1)$.  Hence both numbers lie in $D_h$.  No carrying is needed because
membership in $D_h$ asks only for such a signed representation.  Set
$r_x=|r_0|$; changing the sign of $r_0$ only swaps the two endpoints.
\end{proof}

Recall that $\beta=1-(n-k)/(2n^2)$.

\begin{proposition}\label{prop:upper}
For each $0\leq k<n$, there is a constant $C_{n,k}>0$ such that
\[
 F_{n,k}(N)\leq C_{n,k}N^\beta
\]
for every $N\geq1$.
\end{proposition}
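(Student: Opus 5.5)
Take $S=\{1,\ldots,h^{2n}-1\}^n$ with $h\geq2$ and the radii $r_x$ from Lemma~\ref{lem:digits}. Let $B$ be the union of the skeletons $\cS_k(x,r_x)$ for $x\in S$. We bound $|B|$ by a product set. A point $y\in\cS_k(x,r_x)$ has some $\ell=n-k$ fixed coordinates $y_i=x_i\pm r_x$. By the lemma, each such coordinate lies in $D_h$. The $k$ free coordinates satisfy $|y_j-x_j|\leq r_x<h^{2n}$ with $1\leq x_j<h^{2n}$, so $y_j$ lies in the interval $E_h=\{-h^{2n}+2,\ldots,2h^{2n}-2\}$, which has fewer than $3h^{2n}$ points. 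Hence
\[
 B\subset\bigcup_{I\in\binom{[n]}{\ell}}\{y:y_I\in D_h^{\ell},\ y_{[n]\setminus I}\in E_h^{k}\},
\]
and
\[
 |B|\leq\binom{n}{\ell}C_n^\ell h^{(2n-1)\ell}3^kh^{2nk}=C'_{n,k}h^{2n^2-\ell}.
\]

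Now compare with $N_h=|S|=(h^{2n}-1)^n$. For $h\geq2$ we have $N_h\geq(h^{2n}/2)^n$, so
\[
 h^{2n^2}\leq 2^nN_h,\qquad h^{2n^2-\ell}=\bigl(h^{2n^2}\bigr)^{\beta}\leq 2^{n\beta}N_h^\beta.
\]
Thus $F_{n,k}(N_h)\leq C''N_h^\beta$.

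To pass to general $N$, first note that $F_{n,k}$ is nondecreasing in $N$: deleting centers from a configuration keeps it valid. For $N$ below a constant threshold (say $N<N_2$), use $F_{n,k}(N)\leq N\max|\cS_k(x,1)|$, which is at most a constant times $N^\beta$. Otherwise choose the least $h\geq2$ with $N_h\geq N$. Then $N_{h-1}<N$ (when $h\geq3$), and $N_h/N_{h-1}$ is bounded by a constant depending only on $n$, since $h/(h-1)\leq2$ gives $N_h\leq h^{2n^2}\leq 2^{2n^2}(h-1)^{2n^2}\leq 2^{2n^2+n}N_{h-1}$. Monotonicity then gives
\[
 F_{n,k}(N)\leq F_{n,k}(N_h)\leq C''N_h^\beta\leq C''2^{(2n^2+n)\beta}N^\beta.
\]
The case $h=2$ is covered by the small-$N$ bound.

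The main point to check is that the fixed coordinates really lie in $D_h$ for every $\ell$-subset $I$ simultaneously. This holds because Lemma~\ref{lem:digits} gives $x_j\pm r_x\in D_h$ for all $j$, so the union over $I$ costs only a factor $\binom{n}{\ell}$. The remaining work is the exponent arithmetic $(2n-1)\ell+2nk=2n^2-\ell$, which is routine.
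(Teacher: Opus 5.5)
Your proof is correct and follows the paper's argument: the same digit-construction radii, the same covering of the union of skeletons by $\binom{n}{k}$ product sets (fixed coordinates in $D_h$, free coordinates in an interval of size $O(h^{2n})$), and the same exponent count $(2n-1)\ell+2nk=2n^2\beta$. The only difference is cosmetic: the paper reaches general $N$ by choosing $h=\lceil (N^{1/n}+1)^{1/(2n)}\rceil$ directly and checking $h^{2n^2}\leq 2^{2n^2+n}N$, which avoids your monotonicity step and separate small-$N$ case, while your consecutive-ratio argument loses the same constant (the boundary value $N=N_2$ is covered directly by your bound at $h=2$).
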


\begin{proof}
Given $N$, set
\[
 h=\left\lceil (N^{1/n}+1)^{1/(2n)}\right\rceil.
\]
Then $h\geq2$ and
\[
 N\leq(h^{2n}-1)^n,
 \qquad
 h^{2n^2}\leq2^{2n^2+n}N.
\]
Indeed, the first inequality follows from the definition of $h$.  For the
second, use $h\leq2(N^{1/n}+1)^{1/(2n)}$ and
$N^{1/n}+1\leq2N^{1/n}$.

Let
\[
 T_h=\{1,\ldots,h^{2n}-1\},
 \qquad
 I_h=[-h^{2n},2h^{2n}]\cap\Z.
\]
Set
\[
 B_h=
 \bigcup_{J\in\binom{[n]}{k}}
 \{y\in\Z^n:y_j\in I_h\ (j\in J),\ y_i\in D_h\ (i\notin J)\}.
\]

Take $x\in T_h^n$ and choose $r_x$ by Lemma~\ref{lem:digits}.  On a $k$-face
with free-coordinate set $J$, every fixed coordinate $x_i\pm r_x$ lies in
$D_h$.  Every free coordinate lies between $x_j-r_x$ and $x_j+r_x$.  Since
$1\leq x_j,r_x<h^{2n}$, this interval lies in $I_h$.  Therefore
\[
 \cS_k(x,r_x)\subset B_h\qquad(x\in T_h^n).
\]
Since $|I_h|=3h^{2n}+1$,
\[
 |B_h|
 \leq\binom{n}{k}|D_h|^{n-k}|I_h|^k
 \leq C_{n,k}h^{(2n-1)(n-k)+2nk}
 =C_{n,k}h^{2n^2\beta}
 \leq C_{n,k}N^\beta.
\]
Finally, $|T_h^n|\geq N$, so any $N$ of its points give the required set of
centers.
\end{proof}

\begin{proof}[Proof of Theorem~\ref{thm:main}]
Combine Propositions~\ref{prop:lower} and~\ref{prop:upper}.
\end{proof}

\section{Remarks and related work}

Only the small-radius argument uses the lattice: a cell of side $h$ contains
at most $h^n$ lattice points.  A bounded cell can contain arbitrarily many
points of $\R^n$, so the same induction does not apply to unrestricted real
centers.  The fixed-coordinate bound itself holds in $\R^n$.

Keleti surveys small-union problems with large sets of centers
\cite{Keleti2017}.  Chang, Cs\"ornyei, H\'era, and Keleti treat continuous
polytope skeletons and unions of affine subspaces
\cite{ChangCsornyeiHeraKeleti2018}.  H\'era, Keleti, and M\'ath\'e prove
Hausdorff-dimension bounds for such unions and for Furstenberg-type sets
\cite{HeraKeletiMathe2019}.  Fiedler proves packing-dimension bounds for
unions and extensions of $k$-planes \cite{Fiedler2026}.  Olivo and Shmerkin
study maximal operators for cube skeletons \cite{OlivoShmerkin2020}.
Cowen-Breen, Karangozishvili, Varadarajan, and Wang study translated and
dilated patterns in arithmetic Kakeya problems \cite{CowenBreenEtAl2020}.

\bibliographystyle{amsplain}
\bibliography{references}

\end{document}